\documentclass[%
  letterpaper,
  onecolumn,
  colorlinks,
]{preprint}

\usepackage[english]{babel}

\usepackage{amsmath}
\usepackage{amssymb}
\usepackage{amsthm}

\usepackage[linesnumbered, lined, algoruled]{algorithm2e}

\usepackage{graphicx}
\usepackage{subcaption}

\usepackage{tikz}
\usepackage{pgfplots}
  \pgfplotsset{compat = 1.17}
  \pgfkeys{/pgf/number format/.cd, 1000 sep = {\,}}
  \usepgfplotslibrary{external}
  \tikzset{external/system call = {%
    pdflatex \tikzexternalcheckshellescape
      -halt-on-error
      -interaction=batchmode
      -jobname "\image" "\texsource"}}
  \tikzexternaldisable

\newcommand{\C}{\ensuremath{\mathbb{C}}}
\newcommand{\R}{\ensuremath{\mathbb{R}}}

\newcommand{\trans}{\ensuremath{\mkern-1.5mu\mathsf{T}}}
\newcommand{\herm}{\ensuremath{\mathsf{H}}}
\DeclareMathOperator{\frob}{F}
\DeclareMathOperator{\ds}{d}

\newcommand{\CH}{\ensuremath{\mathcal{H}}}
\newcommand{\CI}{\ensuremath{\mathcal{I}}}

\newcommand{\bA}{\ensuremath{\boldsymbol{A}}}
\newcommand{\bB}{\ensuremath{\boldsymbol{B}}}
\newcommand{\bC}{\ensuremath{\boldsymbol{C}}}
\newcommand{\bD}{\ensuremath{\boldsymbol{D}}}
\newcommand{\bE}{\ensuremath{\boldsymbol{E}}}
\newcommand{\bG}{\ensuremath{\boldsymbol{G}}}
\newcommand{\bI}{\ensuremath{\boldsymbol{I}}}
\newcommand{\bJ}{\ensuremath{\boldsymbol{J}}}
\newcommand{\bL}{\ensuremath{\boldsymbol{L}}}
\newcommand{\bP}{\ensuremath{\boldsymbol{P}}}
\newcommand{\bQ}{\ensuremath{\boldsymbol{Q}}}
\newcommand{\bR}{\ensuremath{\boldsymbol{R}}}
\newcommand{\bT}{\ensuremath{\boldsymbol{T}}}
\newcommand{\bU}{\ensuremath{\boldsymbol{U}}}
\newcommand{\bV}{\ensuremath{\boldsymbol{V}}}
\newcommand{\bW}{\ensuremath{\boldsymbol{W}}}

\newcommand{\bj}{\ensuremath{\boldsymbol{j}}}
\newcommand{\bk}{\ensuremath{\boldsymbol{k}}}
\newcommand{\bu}{\ensuremath{\boldsymbol{u}}}
\newcommand{\bx}{\ensuremath{\boldsymbol{x}}}
\newcommand{\by}{\ensuremath{\boldsymbol{y}}}
\newcommand{\bz}{\ensuremath{\boldsymbol{z}}}

\newcommand{\bPhi}{\ensuremath{\boldsymbol{\Phi}}}
\newcommand{\bSigma}{\ensuremath{\boldsymbol{\Sigma}}}

\newcommand{\bAr}{\ensuremath{\skew4\widehat{\bA}}}
\newcommand{\bBr}{\ensuremath{\skew4\widehat{\bB}}}
\newcommand{\bCr}{\ensuremath{\skew4\widehat{\bC}}}
\newcommand{\bEr}{\ensuremath{\skew4\widehat{\bE}}}
\newcommand{\bGr}{\ensuremath{\skew4\widehat{\bG}}}

\newcommand{\bxr}{\ensuremath{\skew2\hat{\bx}}}
\newcommand{\byr}{\ensuremath{\skew2\hat{\by}}}

\newcommand{\tbA}{\ensuremath{\skew4\widetilde{\bA}}}
\newcommand{\tbB}{\ensuremath{\skew4\widetilde{\bB}}}
\newcommand{\tbC}{\ensuremath{\skew4\widetilde{\bC}}}
\newcommand{\tbE}{\ensuremath{\skew4\widetilde{\bE}}}
\newcommand{\tbG}{\ensuremath{\skew4\widetilde{\bG}}}

\newcommand{\AL}{\ensuremath{\mathbb{A}_{\operatorname{L}}}}
\newcommand{\BL}{\ensuremath{\mathbb{B}_{\operatorname{L}}}}
\newcommand{\CL}{\ensuremath{\mathbb{C}_{\operatorname{L}}}}
\newcommand{\EL}{\ensuremath{\mathbb{E}_{\operatorname{L}}}}

\renewcommand{\i}{\ensuremath{\mathfrak{i}}}
\newcommand{\bRcheck}{\ensuremath{\boldsymbol{\check{R}}}}
\newcommand{\bLcheck}{\ensuremath{\boldsymbol{\check{L}}}}
\newcommand{\bUcheck}{\ensuremath{\boldsymbol{\check{U}}}}
\newcommand{\bVcheck}{\ensuremath{\boldsymbol{\check{V}}}}
\newcommand{\bSigmacheck}{\ensuremath{\boldsymbol{\check{\Sigma}}}}
\newcommand{\bzero}{\ensuremath{\boldsymbol{0}}}

\newcommand{\tbz}{\ensuremath{\skew2\tilde{\bz}}}
\newcommand{\bJr}{\ensuremath{\bJ_{\operatorname{r}}}}
\newcommand{\bJl}{\ensuremath{\bJ_{\operatorname{\ell}}}}
\newcommand{\zMin}{\ensuremath{z}_{\operatorname{min}}}
\newcommand{\zMax}{\ensuremath{z}_{\operatorname{max}}}
\newcommand{\relerr}{\ensuremath{\operatorname{relerr}}}

\newcommand{\Quadbt}{\textsf{QuadBT}}
\newcommand{\symQuadbt}{\textsf{SymQuadBT}}
\newcommand{\FDsymQuadbt}{\textsf{FDSymQuadBT}}

\theoremstyle{plain}\newtheorem{theorem}{Theorem}
\theoremstyle{plain}\newtheorem{corollary}{Corollary}

\definecolor{matlabblue}{HTML}{0072BD}
\definecolor{matlaborange}{HTML}{D95319}
\definecolor{matlabyellow}{HTML}{EDB120}
\definecolor{matlabpurple}{HTML}{7E2F8E}
\definecolor{matlabgreen}{HTML}{77AC30}
\definecolor{matlablightblue}{HTML}{4DBEEE}
\definecolor{matlabred}{HTML}{A2142F}

\tikzstyle{sline} = [
  solid,
  line width = 1.5pt
]

\newcommand{\plotfontsize}{\footnotesize}

\pgfplotscreateplotcyclelist{plotlist}{
  {black, sline},
  {matlabblue, sline, dashed, mark options = {solid}, mark = o, mark repeat = 125, mark phase = 0},
  {matlabpurple, sline, dashdotted, mark options = {solid}, mark = triangle*, mark repeat = 125, mark phase = 15},
  {matlabgreen, sline, dotted, mark options = {solid}, mark = *, mark repeat = 125, mark phase = 30},
  {matlaborange, sline, dashed, mark options = {solid}, mark = diamond*, mark repeat = 125, mark phase = 45},
  {matlabred, sline, dashdotted, mark options = {solid}, mark = square*, mark repeat = 125, mark phase = 60},
  {matlabyellow, sline, dotted, mark options = {solid}, mark = +, mark repeat = 125, mark phase = 75},
}

\begin{document}


\title{Symmetric Hermite quadrature-based balanced truncation for learning
  linear dynamical systems from derivative data}

\author[$\ast$]{Sean Reiter}
\affil[$\ast$]{Courant Institute School of Mathematics, Computing, and
  Data Science, New York University, New York, NY 10012, USA.\authorcr
  \email{s.reiter@nyu.edu}, \orcid{0000-0002-7510-1530}}

\author[$\dagger$]{Steffen W. R. Werner}
\affil[$\dagger$]{Department of Mathematics,
  Division of Computational Modeling and Data Analytics, and
  National Security Institute,
  Virginia Tech, Blacksburg, VA 24061, USA.\authorcr
  \email{steffen.werner@vt.edu}, \orcid{0000-0003-1667-4862}}

\shorttitle{Symmetric Hermite quadrature-based balanced truncation}
\shortauthor{S. Reiter, S. W. R. Werner}
\shortdate{2026-05-29}
\shortinstitute{}

\keywords{}

\msc{}

\abstract{%
  Data-driven reduced-order modeling is an essential component in the
  computer-aided design of control systems.
  In this work, we present a novel symmetric Hermite formulation of the
  quadrature-based balanced truncation algorithm that constructs linear
  reduced-order models from evaluations of the full-order system's 
  transfer function and its derivative.
  Significantly, the Hermite formulation preserves desirable qualitative
  properties of the system used to generate the data, such as
  state-space Hermiticity and, consequently, asymptotic stability.
}

\novelty{}

\maketitle


\section{Introduction}%
\label{sec:intro}

The data-driven modeling of dynamical processes is a crucial tool in
the computational sciences.
It enables the construction of high-fidelity mathematical models for
complex physical phenomena when first-principle models are not available,
but ample input/output data exist.
A fundamental system class of interest for the modeling of dynamical processes 
is that of linear continuous-time input-output systems.
In state-space form, such systems are expressed as
\begin{equation} \label{eqn:sys}
  \bE \dot{\bx}(t) = \bA \bx(t) + \bB \bu(t), \quad
  \by(t) = \bC \bx(t),
\end{equation}
with the matrices $\bE, \bA \in \C^{n \times n}$, $\bB \in \C^{n \times m}$,
and $\bC \in \C^{p \times n}$.
Therein, the external inputs $\bu\colon \R \to \C^{m}$ are used to reach desired
internal states $\bx\colon \R \to \C^{n}$, which can be observed via the outputs
$\by\colon \R \to \C^{p}$.
Equivalently, the input-output behavior of~\cref{eqn:sys} is described in
the frequency domain by the corresponding transfer function
\begin{equation} \label{eqn:tf}
  \bG\colon \C \to \C^{p \times m} \quad\text{with}\quad
  \bG(s) = \bC (s \bE - \bA)^{-1} \bB.
\end{equation}
In this work, we consider the task of modeling linear dynamical systems
of the form~\cref{eqn:sys} from data.
Typical data for this task are transfer function values
and the corresponding evaluation points
\begin{equation} \label{eqn:data}
  \left\{ \big(\i \omega_{i}, \bG(\i \omega_{i}) \big) \right\}_{i = 1}^{N},
\end{equation}
with real frequencies $\omega_{i} \in \R$, for $i = 1, \ldots, N$.

A variety of methods has been developed for the construction of systems such
as~\cref{eqn:sys} from frequency-response data~\cref{eqn:data} in recent
years, for example, those based on the construction of rational
interpolants or the solution of nonlinear optimization problems.
For a comprehensive overview, we refer to~\cite{AntBG20, BenSGetal21} and the
references therein.
One such approach is the recently developed quadrature-based balanced
truncation (QuadBT) method~\cite{GosGB22}.
QuadBT is a data-driven reformulation of the classical balanced truncation
model order reduction technique~\cite{Moo81, MulR76} that enables the
construction of (approximately) balanced low-dimensional models directly from
input-output frequency-response data of the form~\cref{eqn:data}.

In this work, we introduce a symmetric Hermite formulation of the QuadBT
method, which allows the construction of linear con\-tin\-u\-ous-time systems
from data of the form
\begin{equation} \label{eqn:Hdata}
  \left\{ \big(\i \omega_{i}, \bG(\i \omega_{i}), \bG'(\i \omega_{i})
    \big) \right\}_{i = 1}^{N},
\end{equation}
where $\bG'(\sigma) := \left. \tfrac{\ds}{\ds s} \bG(s)
\right\rvert_{s = \sigma}$
denotes the evaluation of the first derivative of the transfer at the
point $\sigma \in \C$.
In contrast to the previous approach from~\cite{GosGB22}, our Hermite
formulation enables the use of symmetric quadrature formulas and allows to
preserve state-space Hermiticity in the constructed reduced-order
model.
As we will show, this also allows to preserve asymptotic stability in the
learned model if the given data are generated by a state-space Hermitian,
dissipative system.


\section{Balanced reduced-order models and quadrature approximations}%
\label{sec:prelims}

The QuadBT method~\cite{GosGB22} is based on the balanced truncation
(BT) model order reduction approach~\cite{Moo81, MulR76}, which constructs
reduced-order models by balancing the controllability and observability
Gramians of the system~\cref{eqn:sys} and subsequently truncating states
corresponding to the smallest magnitude eigenvalues of the balanced Gramians.
For a linear system of the form~\cref{eqn:sys} with $\bE$ invertible and
the eigenvalues of the matrix pencil $\lambda \bE - \bA$ being in the open left
half-plane, the controllability Gramian $\bP \in \C^{n \times n}$ and
observability Gramian $\bE^{\herm} \bQ \bE \in \C^{n \times n}$ can be defined
via the indefinite frequency-domain integrals
\begin{subequations} \label{eqn:Gramians}
\begin{align} 
  \bP & = \int\limits_{-\infty}^{\infty} (\i \omega \bE - \bA) \bB
    \bB^{\herm} (\i \omega \bE - \bA)^{\herm} \ds \omega
  \quad\text{and} \\
  \bQ & = \int\limits_{-\infty}^{\infty} (\i \omega \bE - \bA)^{\herm} \bC^{\herm}
    \bC (\i \omega \bE - \bA) \ds \omega.
\end{align}
\end{subequations}
Given the Cholesky-like factors of the Gramian matrices
$\bP = \bR \bR^{\herm}$ and $\bQ = \bL \bL^{\herm}$, with
$\bR \in \C^{n \times n}$ and $\bL \in \C^{n \times n}$, and the singular value
decomposition
\begin{equation*}
  \bL^{\herm} \bE \bR = \begin{bmatrix} \bU_{1} & \bU_{2} \end{bmatrix}
    \begin{bmatrix} \bSigma_{1} & \bzero \\ \bzero & \bSigma_{2} \end{bmatrix}
    \begin{bmatrix} \bV_{1}^{\herm} \\[2pt] \bV_{2}^{\herm} \end{bmatrix},
  \quad\text{with}\quad
  \bSigma_{1} \in \R^{r \times r},
  \bU_{1} \in \C^{n \times r},
  \bV_{1} \in \C^{n \times r},
\end{equation*}
a balanced reduced-order model of the form
\begin{equation} \label{eqn:rom}
  \bEr \dot{\bxr}(t) = \bAr \bxr(t) + \bBr \bu(t), \quad
  \byr(t) = \bCr \bxr(t),
\end{equation}
with $\bEr, \bAr \in \C^{r \times r}$, $\bBr \in \C^{r \times m}$,
$\bCr \in \C^{p \times r}$, and $r \leq n$, is computed via
\begin{align*}
  \bEr & = \bI_{r}, \\
  \bAr & = \bSigma_{1}^{-1/2} \bU_{1}^{\herm} (\bL^{\herm} \bA \bR)
    \bV_{1} \bSigma_{1}^{-1/2}, \\
  \bBr & = \bSigma_{1}^{-1/2} \bU_{1}^{\herm} (\bL^{\herm} \bB), \\
  \bCr & = (\bC \bR) \bV_{1} \bSigma_{1}^{-1/2}.
\end{align*}

QuadBT is derived from the classical balanced truncation model order reduction
algorithm based on the following two observations:
\begin{enumerate}
  \item The reduced-order model~\cref{eqn:rom} is completely specified by
    the matrix products $\bL^{\herm} \bE \bR$, $\bL^{\herm} \bA \bR$,
    $\bL^{\herm} \bB$, and $\bC \bR$.
  \item These matrix products can be expressed in terms of transfer function
    data~\cref{eqn:data} by replacing the exact square-root factors
    $\bR$ and $\bL$ with (approximate) low-rank factors derived from
    numerical quadrature rules applied to the Gramians~\cref{eqn:Gramians}.
\end{enumerate}
Specifically, let the Gramian matrices in~\cref{eqn:Gramians} be approximated
via quadrature rules of the form
\begin{subequations} \label{eqn:QuadRules}
\begin{align} 
  \bP & \approx \sum\limits_{j = 1}^{J} \varrho_{j}^{2} 
    (\i \zeta_{j} \bE - \bA) \bB \bB^{\herm} (\i \zeta_{j} \bE - \bA)^{\herm}
  \quad\text{and} \\
  \bQ & \approx \sum\limits_{k = 1}^{K} \varphi_{k}^{2}
    (\i \vartheta_{k} \bE - \bA)^{\herm} \bC^{\herm}
    \bC (\i \vartheta_{k} \bE - \bA),
\end{align}
\end{subequations}
with the quadrature nodes and weights
$\{ (\zeta_{j}, \varrho_{j}) \}_{j = 1}^{J}$ and
$\{ (\vartheta_{k}, \varphi_{k}) \}_{k = 1}^{K}$.
Then, Cholesky-like factorizations of the quadrature-based approximations
to the Gramians $\bP \approx \bRcheck \bRcheck^{\herm}$ and
$\bQ \approx \bLcheck \bLcheck^{\herm}$ are given via
\begin{subequations} \label{eqn:QuadFactors}
\begin{align} 
  \bRcheck & =
    \begin{bmatrix} \varrho_{1} \bPhi(\i \zeta_{1})^{-1} \bB &
    \varrho_{2} \bPhi(\i \zeta_{2})^{-1} \bB & \ldots &
    \varrho_{J} \bPhi(\i \zeta_{J})^{-1} \bB \end{bmatrix}
  \quad\text{and}\\
  \bLcheck^{\herm} & =
    \begin{bmatrix} \varphi_{1} \bC \bPhi(\i \vartheta_{1})^{-1} \\
      \varphi_{2} \bC \bPhi(\i \vartheta_{2})^{-1}\\[2pt]
      \vdots\\[2pt]
      \varphi_{K} \bC \bPhi(\i \vartheta_{K})^{-1}
    \end{bmatrix},
\end{align}
\end{subequations}
where $\bPhi(s) = s \bE - \bA$.
With these particular Gramian factors, the matrix products in balanced
truncation then take the form
\begin{subequations} \label{eqn:DataMatrices}
\begin{align} 
  \EL & = \bLcheck^{\herm} \bE \bRcheck \in \C^{pK \times mJ}, \\
  \AL & = \bLcheck^{\herm} \bA \bRcheck \in \C^{pK \times mJ}, \\
  \BL & = \bLcheck^{\herm} \bB \in \C^{pK \times m}, \\
  \CL & = \bC \bRcheck \in \C^{p \times mJ}.
\end{align}
\end{subequations}
It has been shown in~\cite{GosGB22} that these matrices $\EL$, $\AL$,
$\BL$, and $\CL$ can be directly written in terms of transfer function data of
the form~\cref{eqn:data}.
The rest of the QuadBT approach follows the classical balanced truncation
methodology by truncating the states corresponding to the small singular values
of the $\EL$ matrix.


\section{Symmetric Hermite quadrature-based balanced truncation}%
\label{sec:method}

A limitation of the formulas from~\cite{GosGB22} that provide the construction
of the data matrices $\EL$, $\AL$, $\BL$, and $\CL$ in~\cref{eqn:DataMatrices}
is that they can only be applied under the assumption that the quadrature rules
used to derive $\bRcheck$ and $\bLcheck$ are asymmetric and have distinct
nodes, i.e., $\zeta_{j} \neq \vartheta_{k}$ for all $j = 1, \ldots, J$ and
$k = 1, \ldots, K$.
As a consequence, the data matrices in~\cref{eqn:DataMatrices} cannot be
symmetric, thereby preventing the preservation of potential symmetry structures
in the reduced-order model construction.

In the following, we show how to lift this limitation for symmetric
quadrature rules by incorporating derivative data into the
matrices~\cref{eqn:DataMatrices}.
Significantly, this will enable us to preserve state-space Hermiticity in the
reduced-order model, and thus asymptotic stability.


\subsection{Hermite data matrix formulae and algorithm}%
\label{sec:formulae}

The following theorem states the construction of the data matrices
$\EL$, $\AL$, $\BL$, and $\CL$ in~\cref{eqn:DataMatrices} in the case of
symmetric quadrature rules, providing explicit formulas for all four matrices.
In contrast to the formulas from~\cite{GosGB22}, \Cref{thm:btFromHData}
incorporates derivative information about the system's transfer function.

\begin{theorem} \label{thm:btFromHData}
  Assume that the left and right quadrature rules in~\cref{eqn:QuadRules}
  have the same number of terms $J = K = N$, and are symmetric with respect to
  the origin so that
  $\zeta_{i} = \omega_{i}$,
  $\vartheta_{i} = -\omega_{i}$, and
  $\varrho_{i} = \vartheta_{i} = \xi_{i}$, for all
  $i = 1, \ldots, N$.
  Furthermore, assume that the points $\i \omega_{i}$ and $-\i \omega_{i}$
  are not poles of the transfer function $\bG$, for $i = 1, \ldots, N$.
  Let the quadrature-based factors $\bRcheck\in\C^{n \times  mN}$ and
  $\bLcheck \in \C^{pN \times n}$ be given as
  in~\cref{eqn:QuadFactors}, and let the matrices
  $\EL, \AL \in \C^{pN \times mN}$,
  $\BL \in \C^{pN \times m}$, and
  $\CL \in \C^{p \times mN}$ be as in~\cref{eqn:DataMatrices}.
  Then, the matrices $\EL$, $\AL$, $\BL$ and $\CL$ are given
  block-entrywise by
  \begin{subequations} \label{eqn:dataFormulas_Hermite}
  \begin{align}
    \label{eqn:EfromData_Hermite}
    \big( \EL \big)_{\bk, \bj} & =
      \begin{cases}
        \displaystyle -\xi_{k} \xi_{j} \bG'(\i \omega_{j}) &
          \text{if} \quad \omega_{j} = -\omega_{k}, \\[10pt]
        \displaystyle -\xi_{k} \xi_{j}
          \frac{\bG(-\i \omega_{k}) - \bG(\i \omega_{j})}%
          {-\i \omega_{k} - \i \omega_{j}} & \text{if} \quad
          \omega_{j} \neq -\omega_{k},
      \end{cases}\\[2ex]
    \label{eqn:AfromData_Hermite}
    \big( \AL \big)_{\bk, \bj} & =
      \begin{cases}
        \displaystyle -\xi_{k} \xi_{j}
          \Big( \bG(\i \omega_{j}) + \i \omega_{j}
            \bG'(\i \omega_{j}) \Big)  &
          \text{if} \quad \omega_{j} = -\omega_{k}, \\[10pt]
    \displaystyle -\xi_{k} \xi_{j}
      \frac{-\i \omega_{k} \bG(-\i \omega_{k}) - \i \omega_{j}
        \bG(\i \omega_{j})}%
        {-\i \omega_{k} - \i \omega_{j}} & \text{if} \quad
        \omega_{j} \neq -\omega_{k},
      \end{cases}\\[2ex]
    \label{eqn:BfromData_Hermite}
    \big( \BL \big)_{\bk, :} & = \xi_{k} \bG(-\i \omega_{k}),\\[2ex]
    \label{eqn:CfromData_Hermite}
    \big( \CL \big)_{:, \bj} & = \xi_{j} \bG(\i \omega_{j}),
  \end{align}
  \end{subequations}
  for $k, j = 1,\ldots, N$, where the boldface $\bk$ and $\bj$
  denote $(p \times m)$-sized matrix blocks.
\end{theorem}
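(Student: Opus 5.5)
Since all four data matrices in~\cref{eqn:DataMatrices} are built from the factors~\cref{eqn:QuadFactors}, the proof is a direct block-entrywise computation. We use that the $\bk$-th block row of $\bLcheck^{\herm}$ is $\xi_{k} \bC \bPhi(-\i \omega_{k})^{-1}$ and the $\bj$-th block column of $\bRcheck$ is $\xi_{j} \bPhi(\i \omega_{j})^{-1} \bB$. The matrices $\BL$ and $\CL$ then follow at once: $(\BL)_{\bk,:} = \xi_{k} \bC (-\i\omega_{k}\bE - \bA)^{-1}\bB = \xi_{k}\bG(-\i\omega_{k})$, and likewise $(\CL)_{:,\bj} = \xi_{j}\bG(\i\omega_{j})$. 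Both inverses exist by the assumption that $\pm\i\omega_{i}$ are not poles; strictly, we need the pencil to be regular at these points, which we take as implied by the standing assumptions on~\cref{eqn:sys}.

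For $\EL$ and $\AL$ we write $\sigma = -\i\omega_{k}$ and $\mu = \i\omega_{j}$, so that
\[
(\EL)_{\bk,\bj} = \xi_{k}\xi_{j}\,\bC\bPhi(\sigma)^{-1}\bE\bPhi(\mu)^{-1}\bB
\quad\text{and}\quad
(\AL)_{\bk,\bj} = \xi_{k}\xi_{j}\,\bC\bPhi(\sigma)^{-1}\bA\bPhi(\mu)^{-1}\bB.
\]
In the case $\sigma \neq \mu$, i.e., $\omega_{j} \neq -\omega_{k}$, we use the resolvent identity $\bPhi(\sigma)^{-1} - \bPhi(\mu)^{-1} = \bPhi(\sigma)^{-1}\big(\bPhi(\mu) - \bPhi(\sigma)\big)\bPhi(\mu)^{-1} = (\mu - \sigma)\bPhi(\sigma)^{-1}\bE\bPhi(\mu)^{-1}$. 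Multiplying by $\bC$ on the left and $\bB$ on the right gives $\bC\bPhi(\sigma)^{-1}\bE\bPhi(\mu)^{-1}\bB = -\big(\bG(\sigma)-\bG(\mu)\big)/(\sigma-\mu)$, which is exactly~\cref{eqn:EfromData_Hermite} with $\sigma - \mu = -\i\omega_{k} - \i\omega_{j}$. For $\AL$ we write $\bA = s\bE - \bPhi(s)$ and choose the splitting so that the products telescope: $\bPhi(\sigma)^{-1}\bA\bPhi(\mu)^{-1} = \mu\,\bPhi(\sigma)^{-1}\bE\bPhi(\mu)^{-1} - \bPhi(\sigma)^{-1}$. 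Then
\[
\bC\bPhi(\sigma)^{-1}\bA\bPhi(\mu)^{-1}\bB = -\mu\,\frac{\bG(\sigma)-\bG(\mu)}{\sigma-\mu} - \bG(\sigma) = -\frac{\sigma\bG(\sigma) - \mu\bG(\mu)}{\sigma-\mu},
\]
which matches~\cref{eqn:AfromData_Hermite}.

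In the Hermite case $\sigma = \mu$, i.e., $\omega_{j} = -\omega_{k}$, the divided differences cannot be used, and instead we differentiate $\bG(s) = \bC\bPhi(s)^{-1}\bB$ directly. This gives $\bG'(s) = -\bC\bPhi(s)^{-1}\bE\bPhi(s)^{-1}\bB$, so $(\EL)_{\bk,\bj} = -\xi_{k}\xi_{j}\bG'(\i\omega_{j})$. The same splitting as above, now with $\sigma = \mu$, gives $\bC\bPhi(\mu)^{-1}\bA\bPhi(\mu)^{-1}\bB = \mu\,\bC\bPhi(\mu)^{-1}\bE\bPhi(\mu)^{-1}\bB - \bG(\mu) = -\big(\bG(\mu) + \mu\bG'(\mu)\big)$. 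This is the claimed formula for $\AL$. As a check, it is also the limit of the divided-difference formula as $\sigma \to \mu$, namely $-\frac{\ds}{\ds s}\big(s\bG(s)\big)$.

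None of these steps is a real obstacle. The two points that need care are the bookkeeping of signs, which comes from $\vartheta_{k} = -\omega_{k}$ turning the left evaluation point into $-\i\omega_{k}$, and, in the $\AL$ formula, choosing the splitting of $\bA$ so that the leftover term is $\bG(\sigma)$ and the expression reduces to the stated symmetric form.
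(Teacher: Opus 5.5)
Your proof is correct and follows essentially the same route as the paper: a direct block-entrywise evaluation of $\bLcheck^{\herm}\bE\bRcheck$ and $\bLcheck^{\herm}\bA\bRcheck$, using $\bG'(s) = -\bC\bPhi(s)^{-1}\bE\bPhi(s)^{-1}\bB$ in the case $\omega_{j} = -\omega_{k}$. The differences are minor. You derive the divided-difference formulas for $\omega_{j} \neq -\omega_{k}$ yourself via the resolvent identity, and you write out the splitting $\bA = \mu\bE - \bPhi(\mu)$ for $\AL$, whereas the paper cites~\cite{GosGB22} for the first and says ``similarly'' for the second. Your remark is also right that what is really needed is invertibility of $\bPhi(\pm\i\omega_{i})$, which is already required for~\cref{eqn:QuadFactors} to be defined, and not merely that $\pm\i\omega_{i}$ are not poles of $\bG$.
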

\begin{proof}
  The expressions in~\cref{eqn:dataFormulas_Hermite} for the case of 
  $\omega_{j} \neq -\omega_{k}$ directly follow from the original QuadBT
  framework in~\cite{GosGB22} with the choice of quadrature nodes
  $\zeta_{j} = \omega_{j}$ and $\vartheta_{k} = -\omega_{k}$
  in~\cref{eqn:QuadFactors}.
  It remains to be shown that the formulae~\cref{eqn:EfromData_Hermite}
  and~\cref{eqn:AfromData_Hermite} hold in the case of
  $\omega_{j} = -\omega_{k}$.
  To this end, we observe that by definition of the quadrature
  factors~\cref{eqn:QuadFactors} with $\omega_{j} = -\omega_{k}$
  and $\xi_{j} = \xi_{k}$, it holds that
  \begin{equation*}
    \big( \EL \big)_{\bk, \bj} =
      \bI_{k, p}^{\trans} \left( \bLcheck^{\herm} \bE \bRcheck \right)
      \bI_{j, m} =
      \xi_{j}^{2} \bC \bPhi(\i \omega_{j})^{-1} \bE
      \bPhi(\i \omega_{j})^{-1} \bB =
      -\xi_{j}^{2} \bG'(\i \omega_{j}),
  \end{equation*}
  which proves~\cref{eqn:EfromData_Hermite}.
  Similarly, the formula in~\cref{eqn:AfromData_Hermite} for
  $\omega_{j} = -\omega_{k}$ with $\xi_{j} = \xi_{k}$ follows,
  proving the theorem.
\end{proof}

We note that the results in \Cref{thm:btFromHData} are the first-order
formulation of the results that we developed for second-order dynamical systems
in~\cite[Thm.~2]{ReiW25a}.

\Cref{thm:btFromHData} provides the foundation for a QuadBT method that
incorporates symmetric quadrature rules and derivative data.
The term symmetric quadrature is chosen here because the assumptions made in
\Cref{thm:btFromHData} imply that if the node-weight pair
$(\omega_{j}, \xi_{j})$ is part of a chosen quadrature rule, then
$(-\omega_{j}, \xi_{j})$ must also be a pair in the quadrature rule.
The resulting computational procedure for the proposed symmetric Hermite
QuadBT method is summarized in \Cref{alg:HQuadBT}.
Note that the reduction order $r$ in \Cref{alg:HQuadBT} is assumed to
be at most the number of nonzero singular values of the matrix $\EL$ so that
$\bSigmacheck_{1}$ is invertible.

\begin{algorithm}[t]
  \SetAlgoHangIndent{1pt}
  \DontPrintSemicolon
  \caption{Symmetric Hermite quadrature-based balanced truncation.} 
  \label{alg:HQuadBT}

  \KwIn{System transfer function~$\bG$ with derivative $\bG'$,
    symmetric quadrature nodes and weights
    $\{(\omega_{i}, \xi_{i})\}_{i = 1}^{N}$ following
    \Cref{thm:btFromHData},
    reduction order $r$.}
  \vspace{2pt}
  \KwOut{Reduced-order system matrices $\bAr, \bBr, \bCr, \bEr$
    in~\cref{eqn:rom}.}

  Evaluate the transfer function $\bG$ at the quadrature nodes to obtain
    the data
    \begin{equation*}
      \left\{ \big( \bG(\i \omega_{i}), \bG'(\i \omega_{i}) \big)
        \right\}_{i = 1}^{N}
    \end{equation*}
    and construct the data matrices $\EL, \AL, \BL, \CL$
    according to~\cref{eqn:dataFormulas_Hermite}.\;
  
  Compute the singular value decomposition
    \begin{equation*}
      \EL = \begin{bmatrix} \bUcheck_{1} & \bUcheck_{2}\end{bmatrix}
        \begin{bmatrix} \bSigmacheck_{1} & \bzero \\ \bzero & \bSigmacheck_{2}
        \end{bmatrix}
        \begin{bmatrix} \bVcheck_{1}^{\herm} \\[2pt] \bVcheck_{2}^{\herm}
        \end{bmatrix},
    \end{equation*}
    for $\bSigmacheck_{1} \in \R^{r \times r}$ diagonal containing the
    $r$ largest nonzero singular values, and $\bUcheck_{1}$, $\bVcheck_{1}$,
    $\bSigmacheck_{2}$, $\bUcheck_{2}$, and $\bVcheck_{2}$ partitioned
    accordingly.\;
    
  Compute the reduced-order model matrices according to
    \begin{equation*}
      \bEr = \bI_{r}, \quad
      \bAr = \bSigmacheck_{1}^{-1/2} \bUcheck_{1}^{\herm} \AL
        \bVcheck_{1} \bSigmacheck_{1}^{-1/2}, \quad
      \bBr = \bSigmacheck_{1}^{-1/2} \bUcheck_{1}^{\herm} \BL, \quad
      \bCr = \CL \bVcheck_{1} \bSigmacheck_{1}^{-1/2}.
    \end{equation*}\;
    \vspace{-\baselineskip}
\end{algorithm}


\subsection{Stability preservation for state-space Hermitian, dissipative
  systems}%
\label{sec:Hermitian}

While \Cref{alg:HQuadBT} can be used for the construction of arbitrary
dynamical systems from given data~\cref{eqn:Hdata}, the use of symmetric
quadrature rules enables the preservation of Hermiticity in system matrices.
We say that a linear continuous-time model of the form~\cref{eqn:sys} is
state-space Hermitian, if the system matrices in~\cref{eqn:sys} satisfy
\begin{equation} \label{eqn:SSHsys}
  \bE = \bE^{\herm}, \quad
  \bA = \bA^{\herm}, \quad
  \bB = \bC^{\herm}.
\end{equation}
If additionally $\bE$ is symmetric positive definite and
$\bA$ is symmetric negative definite, then the state-space model is called
dissipative.
In the subsequent results, we show that
if there exists a state-space Hermitian, dissipative state-space
model~\cref{eqn:sys} of the system from which the
data~\cref{eqn:Hdata} are generated, then \Cref{alg:HQuadBT} will
produce a reduced-order model that preserves both of these qualities.

\begin{theorem} \label{thm:SSHsys}
  Let $\bE, \bA, \bB, \bC$ be a realization of the model corresponding to the
  transfer function $\bG$.
  If there exists a generalized state-space transformation with basis matrices
  $\bT, \bW \in \C^{n \times n}$ so that the model given by
  \begin{equation*}
    \tbE = \bW^{\herm} \bE \bT, \quad
    \tbA = \bW^{\herm} \bA \bT, \quad
    \tbB = \bW^{\herm} \bB, \quad
    \tbC = \bC \bT
  \end{equation*}
  is state-space Hermitian and dissipative, i.e., it holds
  \begin{equation} \label{eqn:SSHDiss}
    \tbE = \tbE^{\herm} > 0, \quad
    \tbA = \tbA^{\herm} < 0, \quad
    \tbB = \tbC^{\herm},
  \end{equation}
  then for any quadrature rule that satisfies the assumptions of
  \Cref{thm:btFromHData}, it holds that the data matrices constructed
  via~\cref{eqn:dataFormulas_Hermite} satisfy
  \begin{equation} \label{eqn:SSHLoewner}
    \EL = \EL^{\herm} \geq 0, \quad
    \AL = \AL^{\herm} \leq 0, \quad
    \BL = \CL^{\herm}.
  \end{equation}
  Moreover, reduced-order models with the matrices $\bEr, \bAr, \bBr, \bCr$
  constructed via \Cref{alg:HQuadBT} are state-space Hermitian and
  dissipative for any reduction order $r$ for which $\bSigmacheck_{1}$
  is invertible.
\end{theorem}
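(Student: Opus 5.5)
The plan is to exploit that the data matrices in~\cref{eqn:dataFormulas_Hermite} depend only on the transfer function $\bG$, which is invariant under the generalized state-space transformation. $\bG$ is invariant because $\bT$ and $\bW$ must be invertible for the transformed model to be a realization of the same $\bG$, so that $\tbC(s\tbE - \tbA)^{-1}\tbB = \bC\bT\bT^{-1}(s\bE-\bA)^{-1}\bW^{-\herm}\bW^{\herm}\bB = \bG(s)$. Since the data formulas of \Cref{thm:btFromHData} involve only values of $\bG$ and $\bG'$ at $\pm\i\omega_i$, I may compute $\EL, \AL, \BL, \CL$ via~\cref{eqn:DataMatrices} using the Hermitian realization $\tbE, \tbA, \tbB, \tbC$. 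That is, I set $\EL = \tilde{\bL}^{\herm}\tbE\tilde{\bR}$, $\AL = \tilde{\bL}^{\herm}\tbA\tilde{\bR}$, $\BL = \tilde{\bL}^{\herm}\tbB$ and $\CL = \tbC\tilde{\bR}$, with $\tilde{\bR}, \tilde{\bL}$ the quadrature factors~\cref{eqn:QuadFactors} built from $\tilde{\bPhi}(s) = s\tbE - \tbA$. Invertibility of $\tilde{\bPhi}(\pm\i\omega_i)$ holds because the dissipative pencil has only real negative eigenvalues.

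The key step is to show $\tilde{\bL} = \tilde{\bR}$. The $k$-th block row of $\tilde{\bL}^{\herm}$ is $\xi_k\tbC\tilde{\bPhi}(-\i\omega_k)^{-1}$. Using Hermiticity, $\tilde{\bPhi}(\i\omega_k)^{\herm} = -\i\omega_k\tbE - \tbA = \tilde{\bPhi}(-\i\omega_k)$, and $\tbC = \tbB^{\herm}$, this block row equals $\xi_k\big(\tilde{\bPhi}(\i\omega_k)^{-1}\tbB\big)^{\herm}$, which is the Hermitian transpose of the $k$-th block column of $\tilde{\bR}$. I take the weights $\xi_k$ to be real, which is implicit for quadrature weights, so that they pass through the transpose unchanged. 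Hence $\tilde{\bL}^{\herm} = \tilde{\bR}^{\herm}$, and therefore
\begin{equation*}
  \EL = \tilde{\bR}^{\herm}\tbE\tilde{\bR} \geq 0, \quad
  \AL = \tilde{\bR}^{\herm}\tbA\tilde{\bR} \leq 0, \quad
  \BL = \tilde{\bR}^{\herm}\tbB = (\tbC\tilde{\bR})^{\herm} = \CL^{\herm},
\end{equation*}
by congruence with $\tbE > 0$ and $\tbA < 0$. This gives~\cref{eqn:SSHLoewner}.

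For the reduced model I argue as follows. Since $\EL$ is Hermitian positive semidefinite, its SVD coincides with an eigendecomposition, so in \Cref{alg:HQuadBT} I may choose $\bUcheck_1 = \bVcheck_1$ with $\bSigmacheck_1 > 0$ the $r$ largest eigenvalues. This choice is forced if those singular values are distinct, since the left and right singular vectors then match up to a common phase that cancels. Then $\bEr = \bI_r > 0$, and
\begin{equation*}
  \bAr = \bSigmacheck_1^{-1/2}\bVcheck_1^{\herm}\AL\bVcheck_1\bSigmacheck_1^{-1/2}
\end{equation*}
is a congruence of $\AL \leq 0$, while $\bBr = \bSigmacheck_1^{-1/2}\bVcheck_1^{\herm}\CL^{\herm} = \bCr^{\herm}$.

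The main obstacle is upgrading $\bAr \leq 0$ to strict negative definiteness, as dissipativity requires. Writing $\bAr = \bZ^{\herm}\tbA\bZ$ with $\bZ = \tilde{\bR}\bVcheck_1\bSigmacheck_1^{-1/2}$, strictness holds exactly when $\bZ$ has full column rank $r$. This follows from
\begin{equation*}
  \bZ^{\herm}\tbE\bZ = \bSigmacheck_1^{-1/2}\bVcheck_1^{\herm}\EL\bVcheck_1\bSigmacheck_1^{-1/2} = \bI_r,
\end{equation*}
so $\bZ$ is injective, and then $\tbA < 0$ gives $\bAr < 0$. A secondary subtlety is the non-uniqueness of the SVD when singular values repeat; I would handle it by noting that the algorithm may use the eigendecomposition of the Hermitian matrix $\EL$, which is a valid SVD.
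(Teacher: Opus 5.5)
Your proposal is correct and follows the paper's proof essentially step by step: pass to the Hermitian dissipative realization via realization independence, show the two quadrature factors coincide, conclude by congruence, and use the eigendecomposition form of the SVD of $\EL$; your identity $\boldsymbol{Z}^{\herm}\tbE\boldsymbol{Z} = \bI_{r}$ is a cleaner justification of the trivial-kernel claim that the paper only asserts ``by construction''. The SVD subtlety you flag is not actually an issue: for Hermitian positive semidefinite $\EL$, each right singular vector $v_{i}$ with $\sigma_{i} > 0$ satisfies $\EL^{2} v_{i} = \sigma_{i}^{2} v_{i}$ and hence $\EL v_{i} = \sigma_{i} v_{i}$, so $u_{i} = \EL v_{i}/\sigma_{i} = v_{i}$ holds for every SVD, including when singular values repeat.
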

\begin{proof}
  First, we note that the data matrices in~\cref{eqn:dataFormulas_Hermite}
  are realization independent as they are constructed only via transfer function
  data and the transfer functions of the model described by
  $\bE, \bA, \bB, \bC$ and $\tbE, \tbA, \tbB, \tbC$ are identical;
  see~\cite{Ant05}.
  Thus, for simplicity of notation, we assume that $\bE, \bA, \bB, \bC$ is the
  state-space Hermitian, dissipative system realization
  satisfying~\cref{eqn:SSHDiss}.
  Following the assumption imposed on the quadrature rules in
  \Cref{thm:btFromHData} and the fact that the system realization is
  state-space Hermitian, the quadrature-based factors
  in~\cref{eqn:QuadFactors} satisfy $\bRcheck = \bLcheck$.
  Using \Cref{thm:btFromHData} and again the fact that the system
  corresponding to $\bG$ is state-space Hermitian, it holds that
  \begin{subequations}
  \begin{alignat*}{3}
    \EL & = \bRcheck^{\herm} \bE \bRcheck &&
      = \bRcheck^{\herm} \bE^{\herm} \bRcheck &&
      = \EL^{\herm}, \\
    \AL & = \bRcheck^{\herm} \bA \bRcheck &&
      = \bRcheck^{\herm} \bA^{\herm} \bRcheck &&
      = \AL^{\herm}, \quad \text{and} \\
    \BL & = \bRcheck^{\herm} \bB &&
      = \bRcheck^{\herm} \bC^{\herm} &&
      = \CL^{\herm}.
  \end{alignat*}
  \end{subequations}
  Furthermore, for any $\bz \in \C^{mN}$, it holds that
  \begin{equation*}
    \bz^{\herm} \EL \bz =
      \underbrace{\bz^{\herm} \bRcheck^{\herm}}_{=\tbz^{\herm}} \bE
      \underbrace{\bRcheck \bz}_{=\tbz}
      = \tbz^{\herm} \bE \tbz \geq 0
    \quad\text{and}\quad
    \bz^{\herm} \AL \bz =
      \underbrace{\bz^{\herm} \bRcheck^{\herm}}_{=\tbz^{\herm}} \bA
      \underbrace{\bRcheck \bz}_{=\tbz}
      = \tbz^{\herm} \bA \tbz \leq 0,
  \end{equation*}
  which proves~\cref{eqn:SSHLoewner}.
  Next, we prove the claims on the constructed reduced-order models.
  Since $\EL$ is Hermitian positive semi-definite, for its singular value
  decomposition, it holds that
  \begin{equation*}
    \EL =  \begin{bmatrix} \bUcheck_{1} & \bUcheck_{2}\end{bmatrix}
      \begin{bmatrix} \bSigmacheck_{1} & \bzero \\ \bzero & \bSigmacheck_{2}
      \end{bmatrix}
      \begin{bmatrix} \bUcheck_{1}^{\herm} \\[2pt] \bUcheck_{2}^{\herm}
      \end{bmatrix}.
  \end{equation*}
  Following the construction of the reduced-order matrices in
  \Cref{alg:HQuadBT}, we see that
  \begin{subequations}
  \begin{align*}
    \bEr & = \bI_{r} = \bEr^{\herm} > 0, \\
    \bAr & = \bSigmacheck_{1}^{-1/2} \bUcheck_{1}^{\herm} \AL
      \bUcheck_{1} \bSigmacheck_{1}^{-1/2} =
      \bAr^{\herm}, \quad\text{and}\\
    \bBr & = \bSigmacheck_{1}^{-1/2} \bUcheck_{1}^{\herm} \BL =
      (\CL \bUcheck_{1} \bSigmacheck_{1}^{-1/2})^{\herm} =
      \bCr^{\herm}
  \end{align*}
  \end{subequations}
  hold.
  It is left to show that $\bAr$ is negative definite.
  To this end, we observe that by construction of $\bUcheck_{1}$ and
  $\bSigmacheck_{1}$, the matrix product
  $\bRcheck \bUcheck_{1} \bSigmacheck_{1}^{-1/2}$ has only a trivial
  kernel so that for all $\bzero \neq \bz \in \C^{r}$, we have that
  $\tbz = \bRcheck \bUcheck_{1} \bSigmacheck_{1}^{-1/2} \bz \neq \bzero$.
  It follows that
  \begin{equation*}
    \bz^{\herm} \bAr \bz = \tbz^{\herm} \bA \tbz < 0
    \quad\text{for all}\quad
    \bzero \neq \bz \in \C^{r}.
  \end{equation*}
  This shows that the reduced-order model is state-space Hermitian and
  dissipative for any $r$ for which $\bSigmacheck_{1}$ is invertible,
  thus concluding the proof.
\end{proof}

We emphasize that the results of \Cref{thm:SSHsys} are independent of
\emph{any} explicit realization of the system~\cref{eqn:sys} underlying the
used data.
The theorem states that \Cref{alg:HQuadBT} allows the construction of
system realizations with desired properties solely from data.
It also allows for the data-driven verification of system properties by
checking the properties of the data matrices in~\cref{eqn:SSHLoewner}.

A direct consequence of state-space Hermiticity and dissipativity is asymptotic
stability of the linear system, i.e., the property that the
the eigenvalues of the matrix pencil $\lambda \bE - \bA$ lie in the
open left half-plane.
Consequently, it follows from \Cref{thm:SSHsys} that
\Cref{alg:HQuadBT} preserves asymptotic stability for such systems.

\begin{corollary} \label{cor:stable}
  Let the assumptions of \Cref{thm:SSHsys} hold.
  Reduced-order models constructed via \Cref{alg:HQuadBT} are
  asymptotically stable for all reduction orders $r$ for which
  $\bSigmacheck_{1}$ is invertible.
\end{corollary}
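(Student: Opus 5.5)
By \Cref{thm:SSHsys}, for every admissible $r$ the reduced-order model produced by \Cref{alg:HQuadBT} has $\bEr = \bI_{r}$, $\bAr = \bAr^{\herm} < 0$, and $\bBr = \bCr^{\herm}$. So I only need to show that the eigenvalues of the pencil $\lambda \bEr - \bAr = \lambda \bI_{r} - \bAr$ lie in the open left half-plane. These are just the eigenvalues of $\bAr$.

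The key step is a Rayleigh-quotient argument. Take an eigenpair $\bAr \bz = \lambda \bz$ with $\bz \neq \bzero$. Multiplying by $\bz^{\herm}$ from the left gives
\begin{equation*}
  \lambda = \frac{\bz^{\herm} \bAr \bz}{\bz^{\herm} \bz}.
\end{equation*}
Hermiticity of $\bAr$ makes the numerator real, so $\lambda \in \R$. Negative definiteness, established at the end of the proof of \Cref{thm:SSHsys} via $\bz^{\herm} \bAr \bz = \tbz^{\herm} \bA \tbz < 0$ with $\tbz = \bRcheck \bUcheck_{1} \bSigmacheck_{1}^{-1/2} \bz \neq \bzero$, makes the numerator strictly negative. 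The denominator is positive, hence $\lambda < 0$.

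I could equally phrase this for a general $\bEr = \bEr^{\herm} > 0$, where $\lambda = \bz^{\herm} \bAr \bz / \bz^{\herm} \bEr \bz$, which shows the argument does not rely on the normalization $\bEr = \bI_{r}$. Since all $r$ eigenvalues are real and strictly negative, the reduced model is asymptotically stable.

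There is no real obstacle here. The substantive work, namely strict negativity of $\bAr$ (which needs the trivial kernel of $\bRcheck \bUcheck_{1} \bSigmacheck_{1}^{-1/2}$ and the invertibility of $\bSigmacheck_{1}$), was already done in \Cref{thm:SSHsys}. The only thing to be careful about is citing the strict inequality $\bAr < 0$ rather than the semidefinite $\AL \leq 0$, because the latter alone would permit eigenvalues on the imaginary axis (at zero).
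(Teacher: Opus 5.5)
Your proof is correct and follows the paper's route. You invoke Theorem~\ref{thm:SSHsys} to get $\bEr = \bEr^{\herm} > 0$ and $\bAr = \bAr^{\herm} < 0$, then conclude that the eigenvalues of $\lambda \bEr - \bAr$ lie in the open left half-plane; the paper cites this last fact from Golub--Van Loan, whereas you prove it directly with the standard Rayleigh-quotient argument.
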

\begin{proof}
  This result follows directly from \Cref{thm:SSHsys} since the
  eigenvalues of any matrix pencil $\lambda \bEr - \bAr$ with
  $\bEr = \bEr^{\herm} > 0$ and $\bAr = \bAr^{\herm} < 0$ must lie in the
  open left half-plane~\cite{GolV13}.
\end{proof}

Both \Cref{thm:SSHsys,cor:stable} hold for arbitrary
symmetric quadrature rules that satisfy the assumptions of
\Cref{thm:btFromHData}.
This implies that the preservation of state-space Hermiticity and dissipativity,
and consequently asymptotic stability, can be guaranteed independent of the
accuracy of the quadrature rule.


\subsection{Real system matrices and state-space symmetry}%
\label{sec:real}

In many applications, the matrices describing the dynamical
system~\cref{eqn:sys} are real, and it is desired to preserve this property in
the reduced-order system~\cref{eqn:rom} learned from data.
In the case of real system matrices in~\cref{eqn:sys}, the corresponding
transfer function~\cref{eqn:tf} commutes with conjugation so that
\begin{equation} \label{eqn:conjugate}
  \overline{\bG(s)} = \bG(\overline{s})
  \quad\text{for all}\quad s \in \C.
\end{equation}
The same property~\cref{eqn:conjugate} must hold for the
data~\cref{eqn:Hdata} to correspond to a real system, meaning if
$(\i \omega, \bG(\i \omega), \bG'(\i \omega))$ is a data point of
the underlying system, so is  $(-\i \omega, \overline{\bG(\i \omega)},
\overline{\bG'(\i \omega)})$.
Let the symmetric quadrature formula in \Cref{thm:btFromHData} be ordered
in such a way that
\begin{equation}
\label{eqn:symmNodes}
  \omega_{1} = -\omega_{2}, \quad
  \omega_{3} = -\omega_{4}, \quad
  \ldots, \quad
  \omega_{N - 2} = -\omega_{N - 1}, \quad
  \omega_{N} = 0,
\end{equation}
let $N > 1$ be an odd number, and let the transfer function $\bG$ from which
the data is generated satisfy~\cref{eqn:conjugate}.
Then, there exist the following two transformation matrices
\begin{subequations} \label{eqn:realTrafo}
\begin{align} 
  \bJr & = \begin{bmatrix} \bI_{(N - 1) / 2} \otimes
    \frac{1}{\sqrt{2}} \begin{bmatrix} \bI_{m} & -\i \bI_{m} \\ \bI_{m} &
    \hphantom{-}\i\bI_{m} \end{bmatrix} & \bzero \\ \bzero & \bI_{m}
    \end{bmatrix}
  \quad\text{and}\\
  \bJl & = \begin{bmatrix} \bI_{(N - 1) / 2} \otimes
    \frac{1}{\sqrt{2}} \begin{bmatrix} \bI_{p} & -\i \bI_{p} \\ \bI_{p} &
    \hphantom{-}\i\bI_{p} \end{bmatrix} & \bzero \\ \bzero & \bI_{p}
    \end{bmatrix},
\end{align}
\end{subequations}
where $\otimes$ denotes the Kronecker product, so that the transformed data
matrices $\bJl^{\herm} \EL \bJr$, $\bJl^{\herm} \AL \bJr$,
$\bJl^{\herm} \BL$, and $\CL \bJr$ are real.
Adding this transformation in between the Lines~1 and~2 in
\Cref{alg:HQuadBT} enforces the construction of models with real
matrices.
We note that if $\omega = 0$ is not a quadrature node, then the last block rows
and columns of $\bJr$ and $\bJl$ can be removed;
see, for example,~\cite{AntBG20, ReiW25a}.

In the case of real system matrices, state-space Hermiticity~\cref{eqn:SSHsys}
simplifies to state-space symmetry so that
\begin{equation*}
\bE = \bE^{\trans}, \quad
\bA = \bA^{\trans}, \quad
\bB = \bC^{\trans}.
\end{equation*}
In this case, the transformation matrices in~\cref{eqn:realTrafo} become
equal, $\bJ = \bJr = \bJl$, and the results of \Cref{thm:SSHsys} and
\Cref{cor:stable} hold for the transformed data matrices
$\bJ^{\herm} \EL \bJ$, $\bJ^{\herm} \AL \bJ$, $\bJ^{\herm} \BL$, and $\CL \bJ$,
and the real reduced-order matrices constructed by \Cref{alg:HQuadBT}.
In other words, \Cref{alg:HQuadBT} does preserve state-space symmetry
and dissipativity for real systems.


\subsection{Systems with feed-through terms}%
\label{sec:Dterm}

All results presented is this work also directly apply to systems with
feed-through terms $\bD \in \C^{p \times m}$ of the form
\begin{equation*}
  \bE \dot{\bx}(t) = \bA \bx(t) + \bB \bu(t), \quad
  \by(t) = \bC \bx(t) + \bD \bu(t).
\end{equation*}
To this end, the modified transfer function $\tbG(s) = \bG(s) - \bD$ has to be
used in all formulas above that involve the system's transfer function.
For example, the data used takes the form
\begin{equation*}
  \left\{ \big( \i \omega_{i}, \tbG(\i \omega_{i}), \tbG'(\i \omega_{i}) \big)
    \right\}_{i = 1}^{N}.
\end{equation*}
We note that $\tbG' = \bG'$ holds for the first derivative of the system's and
modified transfer function.
For the reduced-order system, we then have that
\begin{equation*}
  \bEr \dot{\bxr}(t) = \bAr \bxr(t) + \bBr \bu(t), \quad
  \byr(t) = \bCr \bxr(t) + \bD \bu(t),
\end{equation*}
where the matrices $\bEr, \bAr, \bBr, \bCr$ are constructed via
\Cref{alg:HQuadBT} using $\tbG$ instead of $\bG$.
If unknown, the feed-through term can be determined via sampling at high
frequency since $\lim_{s \to \infty} \bG(s) = \bD$.


\section{Numerical experiments}%
\label{sec:numerics}

We demonstrate the proposed symmetric Hermite formulation of QuadBT on two
dynamical system benchmarks from the model reduction literature.
The reported numerical experiments have been performed on a MacBook Air with
8\,GB of RAM and an Apple M2 processor running macOS Ventura version 13.4 with
MATLAB 23.2.0.2515942 (R2023b) Update 7.
The source codes, data, and results of the numerical experiments are available
at~\cite{supReiW26}.


\subsection{Experimental setup}%
\label{sec:setup}

In the subsequent numerical experiments, we compute data-based reduced
models~\cref{eqn:rom} using the following approaches:
\begin{description}
  \item[\symQuadbt{}] is the proposed symmetric Hermite formulation of QuadBT
    in \Cref{alg:HQuadBT};
  \item[\Quadbt{}] is the (non-symmetric) data-driven balanced truncation
    from~\cite{GosGB22};
  \item[\FDsymQuadbt{}] is a variation of \symQuadbt{} where central finite
    differences with a relative step-size of $h\approx 10^{-8}\omega_{k}$
    are used to approximate the values of the transfer
    function~$\bG'(\i \omega_{k})$.
\end{description}
We include \FDsymQuadbt{} as a point of comparison to investigate how the
proposed Hermite-based \symQuadbt{} performs when exact derivative evaluations
of the transfer function are unavailable.

For all of the quadrature-based reduced models, we employ an exponential
trapezoidal quadrature rule to implicitly approximate the Gramians
in~\cref{eqn:Gramians}.
For simplicity, an equal number of left and right quadrature nodes is used,
i.e., $J = K = N$.
In each example, the quadrature rules are applied in chosen intervals along
the imaginary axis; we report these intervals in the subsequent sections.
For the symmetric methods \symQuadbt{} and \FDsymQuadbt{}, we choose the left
and right quadrature nodes to be equal as in \Cref{thm:btFromHData} so
that we have $\vartheta_{i} = -\omega_{i}$, $\zeta_{i} = \omega_{i}$, and 
$\xi_{i} = \varphi_{i} = \varrho_{i}$, for all $i = 1, 2, \ldots, N$.
Moreover, we assume that the nodes are distributed symmetrically along the
imaginary axis according to~\cref{eqn:symmNodes}, without the final
$\omega_{N} = 0$ since $N$ is chosen to be even for our experiments. 
For the non-symmetric \Quadbt{}, we assume that the left and right quadrature
nodes are distributed symmetrically along the imaginary axis so that
\begin{align*}
  \vartheta_{1} & < \vartheta_{3} < \ldots < \vartheta_{N - 1} < 0 <
    \vartheta_{2} < \vartheta_{4} < \ldots < \vartheta_{N}, \\
  \zeta_{1} & < \zeta_{3} < \ldots < \zeta_{N-1} < 0 <
    \zeta_{2} < \zeta_{4} < \ldots < \zeta_{N}.
\end{align*}
This enables a real-valued construction of the \Quadbt{} reduced models by
following a similar procedure to what is described in
\Cref{sec:real}; see~\cite[Sec.~4.1]{GosGB22} for further details.
For each of the employed data-driven modeling approaches, we enforce such a
real-valued construction.

For the presentation of the results, we use the following performance
measures.
For visual comparisons, we plot the magnitude of full- and reduced-order
transfer functions at discrete points on the positive imaginary axis.
We also show the pointwise relative approximation errors of the transfer
functions
\begin{equation} \label{eqn:relerr}
  \relerr(\i z_{k}) = \frac{\lVert \bG(\i z_{k}) -
    \bGr(\i z_{k}) \rVert_{2}}{\lVert \bG(\i z_k) \rVert_{2}},
\end{equation}
at discrete frequencies $z_{k} \in \CI$ from the finite interval
$\CI = [\zMin, \zMax] \subset [0, \infty)$.
The specific choice of $\CI$ varies between the examples.
Additionally, we score the performance of the different methods by using local
approximations to the relative $\CH_{\infty}$ error and relative $\CH_{2}$
error via
\begin{subequations} \label{eqn:Hrelerrs}
\begin{align} 
  \relerr_{\CH_{\infty}} & = \frac{\max_{z_{k} \in \CI} \lVert
    \bG(\i z_{k}) - \bGr(\i z_{k}) \rVert_{2}}%
    {\max_{z_{k} \in \CI} \lVert \bG(\i z_{k}) \rVert_{2}}
    \quad\text{and}\\
  \relerr_{\CH_{2}} & = \sqrt{\frac{\sum\limits_{z_{k} \in \CI} \lVert
    \bG(\i z_{k}) - \bGr(\i z_{k}) \rVert_{\frob}^2}%
    {\sum\limits_{z_{k} \in \CI} \lVert \bG(\i z_{k})
    \rVert_{\frob}^2}}.
\end{align}
\end{subequations}


\subsection{Butterfly gyroscope}%
\label{sec:butterfly}

The first example that we consider is the butterfly gyroscope from
the Oberwolfach Benchmark Collection~\cite{Bil05, morwiki_gyro}, which models a
vibrating mechanical structure used for inertial navigation.
The model is expressed as a first-order system~\cref{eqn:sys} with
$n = 34\,722$ states, $m = 1$ input, and $p = 12$ outputs.
The outputs measure the displacement of the four detection electrodes in the 
spatial directions.
In first-order form, the butterfly gyroscope is not state-space symmetric.
We investigate this benchmark as a proof of generalization for the proposed
symmetric Hermite formulation of QuadBT.

\begin{figure}[t]
  \centering
  \begin{subfigure}[b]{.49\linewidth}
    \centering
  \tikzexternalenable%
  \tikzsetnextfilename{Butterfly_r25_mag}%
  \begin{tikzpicture}[font = \plotfontsize]
  \pgfplotstableread{graphics/data/butterfly_r25_Response.dat}\tableINPUT
  
  \begin{loglogaxis}[%
    width  = .725\textwidth,
    height = .11\textheight,
    scale only axis,
    xmin = 1e4,
    xmax = 1e6,
    ymin = 1e-8,
    ymax = 1e-1,
    xminorticks = true,
    yminorticks = true,
    xlabel = {frequency $\omega$ (rad/s)},
    ylabel = {magnitude},
    ylabel style   = {yshift = -.3em},
    xlabel style   = {yshift = .3em},
    scaled x ticks = false,
    x tick label style = {/pgf/number format/1000 sep={\,}},
    y tick label style = {/pgf/number format/1000 sep={\,}},
    cycle list name    = plotlist
  ]
  
    \foreach \y in {1, 2, ..., 7}{
      \addplot+ table[x index = 0, y index = \y] {\tableINPUT};
    }
  \end{loglogaxis}
\end{tikzpicture}%
  \tikzexternaldisable%

    \caption{frequency response}
  \end{subfigure}%
  \hfill%
  \begin{subfigure}[b]{.49\linewidth}
    \centering
  \tikzexternalenable%
  \tikzsetnextfilename{Butterfly_r25_error}%
  \begin{tikzpicture}[font = \plotfontsize]
  \pgfplotstableread{graphics/data/butterfly_r25_Error.dat}\tableINPUT
  
  \begin{loglogaxis}[%
    width  = .725\textwidth,
    height = .11\textheight,
    scale only axis,
    xmin = 1e4,
    xmax = 1e6,
    ymin = 1e-6,
    ymax = 1e2,
    xminorticks = true,
    yminorticks = true,
    xlabel = {frequency $\omega$ (rad/s)},
    ylabel = {$\relerr(\i\omega)$},
    ylabel style   = {yshift = -.3em},
    xlabel style   = {yshift = .3em},
    scaled x ticks = false,
    x tick label style = {/pgf/number format/1000 sep={\,}},
    y tick label style = {/pgf/number format/1000 sep={\,}},
    cycle list name    = plotlist
  ]

  \pgfplotsset{cycle list shift = 1}
  
    \foreach \y in {1, 2, ..., 6}{
      \addplot+ table[x index = 0, y index = \y] {\tableINPUT};
    }
  \end{loglogaxis}
\end{tikzpicture}%
  \tikzexternaldisable%

    \caption{pointwise relative error}
  \end{subfigure}

  \vspace{.5\baselineskip}
  \resizebox{\linewidth}{!}{%
  \tikzexternalenable%
  \tikzsetnextfilename{legend}%
  \begin{tikzpicture}[font = \plotfontsize]
  \begin{axis}[%
    hide axis,
    width  = 1mm,
    height = 1mm,
    scale only axis,
    xmin = 0,
    xmax = 1,
    ymin = 0,
    ymax = 1,
    legend columns = 4, 
    legend style   = {
      at     = {(0,0)},
      anchor = center,
      /tikz/every even column/.append style = {column sep = 0.2cm}},
    legend cell align  = {left},
    clip mode          = individual,
    cycle list name    = plotlist]

    \addplot+ coordinates{ (0, 0) };
    \addlegendentry{Original model}

    \addplot+ coordinates{ (0, 0) };
    \addlegendentry{\Quadbt{} $(N=50)$}

    \addplot+ coordinates{ (0, 0) };
    \addlegendentry{\symQuadbt{} $(N=50)$}

    \addplot+ coordinates{ (0, 0) };
    \addlegendentry{\FDsymQuadbt{} $(N=50)$}

    \addlegendimage{/pgfplots/legend image code/.code={}}
    \addlegendentry{}

    \addplot+ coordinates{ (0, 0) };
    \addlegendentry{\Quadbt{} $(N=200)$}

    \addplot+ coordinates{ (0, 0) };
    \addlegendentry{\symQuadbt{} $(N=200)$}

    \addplot+ coordinates{ (0, 0) };
    \addlegendentry{\FDsymQuadbt{} $(N=200)$}
  \end{axis}
\end{tikzpicture}%
  \tikzexternaldisable%
}
  
  \caption{Frequency response and pointwise relative errors for $r=25$
    reduced-order models of the butterfly gyroscope benchmark.}
  \label{fig:ButterflyNumerics}
\end{figure}

\begin{table}[t]
  \centering
  \caption{Relative $\CH_{\infty}$ errors and
    $\CH_2$ errors~\cref{eqn:Hrelerrs} for the $r = 25$ reduced-order models of
    the butterfly gyroscope for $N = 50, 200$ quadrature nodes.
    The smallest error is highlighted in \textbf{boldface}.}
  \label{tab:ButterflyRelErrors}
  \vspace{.5\baselineskip}

  \begin{tabular}{lrr}
    \hline\noalign{\smallskip} 
      & \multicolumn{1}{c}{$\relerr_{\CH_\infty}$}
      & \multicolumn{1}{c}{$\relerr_{\CH_2}$} \\
    \noalign{\smallskip}\hline\noalign{\smallskip}
    \Quadbt{} $(N = 50)$ &
      $1.0035\texttt{e-}3$ &
      $5.3610\texttt{e-}4$ \\
    \symQuadbt $(N = 50)$ & 
      $1.1759\texttt{e-}3$ &
      $8.1926\texttt{e-}4$ \\
    \FDsymQuadbt $(N = 50)$ &
      $6.2749\texttt{e-}3$ &
      $5.0262\texttt{e-}3$ \\
    \noalign{\smallskip}\hline\noalign{\smallskip}
    \Quadbt $(N = 200)$ &
      $\boldsymbol{4.0809\texttt{e-}5}$ &
      $\boldsymbol{1.0796\texttt{e-}4}$ \\
    \symQuadbt $(N = 200)$ &
      $4.0920\texttt{e-}5$ &
      $1.0810\texttt{e-}4$ \\
    \FDsymQuadbt $(N = 200)$ &
      $2.4033\texttt{e-}3$ &
      $1.0900\texttt{e-}3$ \\
    \noalign{\smallskip}\hline\noalign{\smallskip}
  \end{tabular}
\end{table}

Multiple reduced-order models of order $r = 25$ are computed using the
\Quadbt{}, \symQuadbt{}, and \FDsymQuadbt{} approaches.
For each approach, a reduced-order model is obtained by projecting the
appropriate data matrices generated from $N = 50$ and $N = 200$ quadrature
nodes, which are chosen to be logarithmically spaced points within
$-\i[10^{4}, 10^{6}] \cup \i[10^{4}, 10^{6}]$.
The frequency response and pointwise relative errors~\cref{eqn:relerr} of the
reduced-order models are presented in \Cref{fig:ButterflyNumerics}, and
the relative error measures~\cref{eqn:Hrelerrs} for the different methods are
shown in \Cref{tab:ButterflyRelErrors}. 
We observe that each of the \Quadbt{}, \symQuadbt{}, and \FDsymQuadbt{} models
provides very satisfactory approximations to the response of the full-order
transfer function.
For this example, as the number of quadrature nodes $N$ increases, the
corresponding reduced-order models become more accurate in the relative error
metrics~\cref{eqn:Hrelerrs}.
The finite difference-based reduced models perform marginally worse at the
lower frequencies, but still provide very satisfactory approximations,
suggesting that the symmetric Hermite formulation of QuadBT can be used even if
derivative data are not available.


\subsection{Steel profile}%
\label{sec:steel}

The second example that we consider is a semi-discretized heat transfer problem
used in the optimal cooling of steel profiles from the Oberwolfach
Benchmark Collection~\cite{BenS05b, morwiki_steel}.
The discretized model is a real-valued linear system with $n = 20\,209$ states
with symmetric $\bE$ and $\bA$ matrices.
To make the system state-space symmetric, we modify the outputs to be
collocated with the inputs so that $\bC = \bB^{\trans}$ with $m = p = 7$.
We investigate this benchmark to demonstrate the structure (symmetry)
preservation of the proposed \symQuadbt{} approach.

As in the previous example, multiple reduced-order models of order $r = 25$
are computed using the \Quadbt{}, \symQuadbt{}, and \FDsymQuadbt{} approaches. 
For each approach, a reduced-order model is obtained by projecting the
appropriate data matrices generated from $N = 50$ and $N = 200$ quadrature
nodes, which are chosen to be logarithmically equidistant points within
$-\i[10^{-6}, 10^{2}] \cup \i[10^{-6}, 10^{2}]$.
The frequency response and pointwise relative errors~\cref{eqn:relerr} of the
reduced-order models are presented in \Cref{fig:RailNumerics}, and the
relative error measures~\cref{eqn:Hrelerrs} for the different methods are
presented in \Cref{tab:RailRelErrors}. 
All of the reduced-order models provide satisfactory reconstructions of
the error.
For this example, the $N = 50$ reduced models perform marginally better than
their $N = 200$ counterparts.

We validate the symmetry and stability preservation results of
\Cref{thm:SSHsys,cor:stable} in
\Cref{tab:RailSymmErrors,tab:stabCheck}. 
For the stability result, a hierarchy of reduced models of orders
$r = 5, 10, \ldots, 25$ are computed via \Quadbt{}, \symQuadbt{},
and \FDsymQuadbt{} using $N = 50$ quadrature nodes. 
By construction, both \symQuadbt{} and \FDsymQuadbt{} preserve state-space
symmetry and asymptotic stability.
On the other hand, the \Quadbt{} reduced models happen to preserve stability as
well, but the underlying state-space symmetry is destroyed in the data-driven
construction.
The \FDsymQuadbt{} reduced models are also asymptotically stable, and nearly
symmetric as well, although this is not preserved to machine precision due
to round-off errors. 

\begin{figure}[t]
  \centering
  \begin{subfigure}[b]{.49\linewidth}
    \centering
  \tikzexternalenable%
  \tikzsetnextfilename{Rail_r25_mag}%
  \begin{tikzpicture}[font = \plotfontsize]
  \pgfplotstableread{graphics/data/rail_r25_Response.dat}\tableINPUT
  
  \begin{loglogaxis}[%
    width  = .725\textwidth,
    height = .11\textheight,
    scale only axis,
    xmin = 1e-6,
    xmax = 1e2,
    ymin = 1e-12,
    ymax = 1e-5,
    xminorticks = false,
    yminorticks = true,
    xlabel = {frequency $\omega$ (rad/s)},
    ylabel = {magnitude},
    ylabel style   = {yshift = -.3em},
    xlabel style   = {yshift = .3em},
    scaled x ticks = false,
    x tick label style = {/pgf/number format/1000 sep={\,}},
    y tick label style = {/pgf/number format/1000 sep={\,}},
    cycle list name    = plotlist
  ]
  
    \foreach \y in {1, 2, ..., 7}{
      \addplot+ table[x index = 0, y index = \y] {\tableINPUT};
    }
  \end{loglogaxis}
\end{tikzpicture}%
  \tikzexternaldisable%

    \caption{frequency response}
  \end{subfigure}%
  \hfill%
  \begin{subfigure}[b]{.49\linewidth}
    \centering
  \tikzexternalenable%
  \tikzsetnextfilename{Rail_r25_error}%
  \begin{tikzpicture}[font = \plotfontsize]
  \pgfplotstableread{graphics/data/rail_r25_Error.dat}\tableINPUT
  
  \begin{loglogaxis}[%
    width  = .725\textwidth,
    height = .11\textheight,
    scale only axis,
    xmin = 1e-6,
    xmax = 1e2,
    ymin = 1e-3,
    ymax = 1e1,
    xminorticks = false,
    yminorticks = true,
    xlabel = {frequency $\omega$ (rad/s)},
    ylabel = {$\relerr(\i\omega)$},
    ylabel style   = {yshift = -.3em},
    xlabel style   = {yshift = .3em},
    scaled x ticks = false,
    x tick label style = {/pgf/number format/1000 sep={\,}},
    y tick label style = {/pgf/number format/1000 sep={\,}},
    cycle list name    = plotlist
  ]

  \pgfplotsset{cycle list shift = 1}
  
    \foreach \y in {1, 2, ..., 6}{
      \addplot+ table[x index = 0, y index = \y] {\tableINPUT};
    }
  \end{loglogaxis}
\end{tikzpicture}%
  \tikzexternaldisable%

    \caption{pointwise relative error}
  \end{subfigure}

  \vspace{.5\baselineskip}
  \resizebox{\linewidth}{!}{%
  \tikzexternalenable%
  \tikzsetnextfilename{legend}%
  \begin{tikzpicture}[font = \plotfontsize]
  \begin{axis}[%
    hide axis,
    width  = 1mm,
    height = 1mm,
    scale only axis,
    xmin = 0,
    xmax = 1,
    ymin = 0,
    ymax = 1,
    legend columns = 4, 
    legend style   = {
      at     = {(0,0)},
      anchor = center,
      /tikz/every even column/.append style = {column sep = 0.2cm}},
    legend cell align  = {left},
    clip mode          = individual,
    cycle list name    = plotlist]

    \addplot+ coordinates{ (0, 0) };
    \addlegendentry{Original model}

    \addplot+ coordinates{ (0, 0) };
    \addlegendentry{\Quadbt{} $(N=50)$}

    \addplot+ coordinates{ (0, 0) };
    \addlegendentry{\symQuadbt{} $(N=50)$}

    \addplot+ coordinates{ (0, 0) };
    \addlegendentry{\FDsymQuadbt{} $(N=50)$}

    \addlegendimage{/pgfplots/legend image code/.code={}}
    \addlegendentry{}

    \addplot+ coordinates{ (0, 0) };
    \addlegendentry{\Quadbt{} $(N=200)$}

    \addplot+ coordinates{ (0, 0) };
    \addlegendentry{\symQuadbt{} $(N=200)$}

    \addplot+ coordinates{ (0, 0) };
    \addlegendentry{\FDsymQuadbt{} $(N=200)$}
  \end{axis}
\end{tikzpicture}%
  \tikzexternaldisable%
}
  
  \caption{Frequency response and pointwise relative errors for $r=25$
    reduced-order models of the steel profile benchmark.}
  \label{fig:RailNumerics}
\end{figure}

\begin{table}[t]
  \centering
  \caption{Relative $\CH_{\infty}$ errors and
    $\CH_2$ errors~\cref{eqn:Hrelerrs} for the $r = 25$ reduced-order models of
    the steel profile for $N = 50, 200$ quadrature nodes.
    The smallest error is highlighted in \textbf{boldface}.}
  \label{tab:RailRelErrors}
  \vspace{.5\baselineskip}

  \begin{tabular}{lrr}
    \hline\noalign{\smallskip} 
      & \multicolumn{1}{c}{$\relerr_{\CH_\infty}$}
      & \multicolumn{1}{c}{$\relerr_{\CH_2}$} \\
    \noalign{\smallskip}\hline\noalign{\smallskip}
    \Quadbt $(N = 50)$ &
      $7.9881\texttt{e-}2$ &
      $6.3279\texttt{e-}2$ \\
    \symQuadbt $(N = 50)$ &
      $\boldsymbol{6.6404\texttt{e-}2}$ &
      $\boldsymbol{5.2536\texttt{e-}2}$ \\
    \FDsymQuadbt $(N = 50)$ &
      $\boldsymbol{6.6404\texttt{e-}2}$ &
      $\boldsymbol{5.2536\texttt{e-}2}$ \\
    \noalign{\smallskip}\hline\noalign{\smallskip}
    \Quadbt $(N = 200)$ &
      $7.8907\texttt{e-}2$ &
      $6.2459\texttt{e-}2$ \\
    \symQuadbt $(N = 200)$ &
      $7.5467\texttt{e-}2$ &
      $5.9711\texttt{e-}2$ \\
    \FDsymQuadbt $(N = 200)$ &
      $7.5467\texttt{e-}2$ &
      $5.9711\texttt{e-}2$ \\
    \noalign{\smallskip}\hline\noalign{\smallskip}
  \end{tabular}
\end{table}

\begin{table}[t]
  \centering
  \caption{State-space symmetry errors for the $r = 25$ reduced-order models of
    the steel profile for $N = 50, 200$ quadrature nodes.}
  \label{tab:RailSymmErrors}
  \vspace{.5\baselineskip}

  \begin{tabular}{lrr}
    \hline\noalign{\smallskip} 
      & \multicolumn{1}{c}{$\|\bAr-\bAr^{\trans}\|_{\frob}/\|\bAr\|_{\frob}$}
      & \multicolumn{1}{c}{$\|\bBr-\bCr^{\trans}\|_{\frob}/\|\bBr\|_{\frob}$} \\
    \noalign{\smallskip}\hline\noalign{\smallskip}
    \Quadbt $(N = 50)$ &
      $3.1603\texttt{e-}3$ &
      $2.4869\texttt{e-}2$ \\
    \symQuadbt $(N = 50)$ &
      $3.2025\texttt{e-}15$ &
      $3.0032\texttt{e-}15$ \\
    \FDsymQuadbt $(N = 50)$ &
      $3.3799\texttt{e-}9$ &
      $7.3136\texttt{e-}9$ \\
    \noalign{\smallskip}\hline\noalign{\smallskip}
    \Quadbt $(N = 200)$ &
      $5.9561\texttt{e-}4$ &
      $5.5025\texttt{e-}3$ \\
    \symQuadbt $(N = 200)$ &
      $4.9731\texttt{e-}15$ &
      $4.4158\texttt{e-}15$ \\
    \FDsymQuadbt $(N = 200)$ &
      $1.6389\texttt{e-}10$ &
      $4.9523\texttt{e-}10$ \\
    \noalign{\smallskip}\hline\noalign{\smallskip}
  \end{tabular}
\end{table}

\begin{table}[t]
  \centering
  \caption{Stability of reduced models of the steel profile computed
    for varying orders $r = 5, 10, \ldots, 25$ and $N = 50$ quadrature nodes.
    Check marks indicate asymptotic stability, and all computed reduced-order
    models turned out to be asymptotically stable.}
  \label{tab:stabCheck}
  \vspace{.5\baselineskip}
  
  \begin{tabular}{lccccc}
    \hline\noalign{\smallskip} 
      & \multicolumn{1}{c}{$r = 5$}
      & \multicolumn{1}{c}{$r = 10$}
      & \multicolumn{1}{c}{$r = 15$}
      & \multicolumn{1}{c}{$r = 20$}
      & \multicolumn{1}{c}{$r = 25$}\\
      \noalign{\smallskip}\hline\noalign{\smallskip}
      \Quadbt{}
      & \checkmark
      & \checkmark
      & \checkmark
      & \checkmark
      & \checkmark\\
      \symQuadbt{}
      & \checkmark
      & \checkmark
      & \checkmark
      & \checkmark
      & \checkmark\\
      \FDsymQuadbt{}
      & \checkmark
      & \checkmark
      & \checkmark
      & \checkmark
      & \checkmark\\
      \noalign{\smallskip}\hline\noalign{\smallskip}
  \end{tabular}
\end{table}


\section{Conclusions}%
\label{sec:conclusions}

We presented a Hermite formulation of the quadrature-based balanced truncation
method that employs transfer function derivative data for the construction of
balanced reduced-order models.
The presented approach allows for the preservation of state-space Hermiticity,
dissipativity and, consequently, asymptotic stability independent of the
chosen reduction order and used quadrature points.
The numerical experiments verify the validity of the method and the theoretical
results, and showcase that also approximate derivative data can effectively be
used to learn dynamical systems with our method.


\phantomsection%
\section*{Acknowledgments}%
\addcontentsline{toc}{section}{Acknowledgments}

Parts of Werner's work were funded by the Deutsche Forschungsgemeinschaft
(DFG, German Research Foundation) under Germany's Excellence Strategy --
EXC-2047/2 -- 390685813.


\phantomsection%
\addcontentsline{toc}{section}{References}
\bibliographystyle{plainurl}
\bibliography{bibtex/myref}

\end{document}